\theoremstyle{plain}
\newtheorem{thm}{Theorem}
\newtheorem{cor}{Corollary}
\newtheorem{prop}[cor]{Proposition}
\theoremstyle{definition}
\numberwithin{cor}{section}
\newcommand{\R}{\mathbb R}
\newcommand{\ep}{\varepsilon}
\newcommand{\dist} {\mathrm{dist}}
\newcommand{\Sy}{{\mathcal S_n}}
\newcommand{\iden}{I_n}
\DeclareMathOperator{\tr}{tr}
\begin{document}

\title{Unique continuation for fully nonlinear elliptic equations}
\author{Scott N. Armstrong}
\address{Department of Mathematics\\ The University of Chicago\\ 5734 S. University Avenue
Chicago, Illinois 60637.}
\email{armstrong@math.uchicago.edu}
\author{Luis Silvestre}
\address{Department of Mathematics\\ The University of Chicago\\ 5734 S. University Avenue
Chicago, Illinois 60637.}
\email{luis@math.uchicago.edu}
\date{\today}
\keywords{unique continuation, fully nonlinear elliptic equation}
\subjclass[2010]{35B60}

\begin{abstract}
We show that a viscosity solution of a uniformly elliptic, fully nonlinear equation which vanishes on an open set must be identically zero, provided that the equation is $C^{1,1}$. We do not assume that the nonlinearity is convex or concave, and thus \emph{a priori} $C^2$ estimates are unavailable. Nevertheless, we use the boundary Harnack inequality and a regularity result for solutions with small oscillations to prove that the solution must be smooth at an appropriate point on the boundary of the set on which it is assumed to vanish. This then permits us to conclude with an application of a classical unique continuation result for linear equations. 
\end{abstract}

\maketitle

\section{Introduction}

In this work, we describe a unique continuation result for viscosity solutions of fully nonlinear elliptic equations. We consider the equation
\begin{equation}\label{Feq}
F(D^2u,Du,u) = 0,
\end{equation}
under the assumption that $F$ is uniformly elliptic, $F\in C^{1,1}$ in a neighborhood of the origin, and $F(0,0,0) = 0$. We prove that any viscosity solution $u$ of \eqref{Feq} in a connected domain $\Omega \subseteq \R^n$ which vanishes of infinite order at a point $x_0 \in \Omega$ must be identically zero in $\Omega$. That is, equation \eqref{Feq} possesses the \emph{strong unique continuation} property.

If the solution $u$ was assumed to be $C^3$, we could easily prove our theorem by linearizing the equation and applying a known unique continuation result for linear equations. The difficulty in establishing such a unique continuation result for fully nonlinear equations lies in the fact that the best \emph{a priori} regularity available for viscosity solutions of \eqref{Feq} is $C^{1,\alpha}$, even in the case that $F$ is smooth. We overcome this obstacle with a strategy that combines the boundary Harnack inequality and a regularity result for ``flat" viscosity solutions due to Savin~\cite{S}. These permit us to apply, in neighborhoods of appropriate points in our domain, a classical unique continuation result for linear equations in nondivergence form.

\medskip

We denote by $\Sy$ the set of symmetric $n$-by-$n$ matrices, and consider a nonlinear function
\begin{equation*}
F: \Sy \times \R^n \times \R \to \R.
\end{equation*}
We require $F=F(M,p,z)$ to satisfy the following hypotheses:
\begin{enumerate}
\item[(F1)] $F$ is uniformly elliptic and Lipschitz; that is, there exist constants $0 < \lambda \leq \Lambda$ and $\gamma,\eta \geq 0$, such that for every $M,N\in \Sy$ such that $N\geq 0$, $p,q\in \R^n$ and $z,w\in \R$,
\begin{multline*}
\lambda \| N \| - \gamma |p-q| - \eta |z-w| \leq F(M+N,p,z) - F(M,q,w) \\ \leq \Lambda \| N \| + \gamma |p-q| + \eta |z-w|,
\end{multline*}
\item[(F2)] $F(0,0,0) = 0$, and
\item[(F3)] $F\in C^{1,1}$ in a neighborhood of the point $(0,0,0) \in \Sy\times\R^n\times\R$.
\end{enumerate}

\medskip

A measurable function $u$, defined in a neighborhood of $x_0$, is said to \emph{vanish of infinite order at} $x_0$ provided that for some $\ep > 0$,
\begin{equation} \label{vaninford}
\sup_{\beta > 0}\,  \limsup_{r\to 0}\, r^{-\beta}\int_{B(x_0,r)} |u|^\ep\, dx = 0.
\end{equation}
Clearly $u$ vanishes of infinite order at $x_0$ if it vanishes in a neighborhood of $x_0$.

\medskip

We now state our main result.

\begin{thm}\label{main}
Assume that $F$ satisfies (F1), (F2), and (F3). Suppose that $u$ is a viscosity solution of \eqref{Feq} in a connected open subset $\Omega$ of $\R^n$. Assume that $u$ vanishes of infinite order at some point $x_0 \in \Omega$. Then $u \equiv 0$. 
\end{thm}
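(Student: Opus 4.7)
The strategy has two stages. First, I would prove the conclusion under the stronger hypothesis that $u$ vanishes identically on some open subset $U$ of $\Omega$. Second, I would reduce the general infinite-order vanishing case at $x_0$ to that situation.

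For the open-set step, suppose for contradiction that $U \subsetneq \Omega$ is maximal with $u \equiv 0$ on $U$, and fix $x_1 \in \partial U \cap \Omega$. On the non-vanishing side, $u^+$ and $u^-$ each satisfy a Pucci extremal inequality and vanish on $U$, so the boundary Harnack inequality for such equations controls their ratios near $x_1$; combined with Caffarelli's \emph{a priori} $C^{1,\alpha}$ estimate, this forces $u$, after an appropriate rescaling, to be arbitrarily flat near $x_1$ relative to an affine solution (the zero function). Savin's regularity theorem for flat viscosity solutions of uniformly elliptic fully nonlinear equations then upgrades $u$ to $C^{2,\alpha}$ in a neighborhood of $x_1$. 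Because $F \in C^{1,1}$, each $\partial_k u$ satisfies a linear equation with $C^{0,\alpha}$ coefficients, and a Schauder bootstrap gives $u \in C^{3,\alpha}$. We may now linearize \eqref{Feq}: writing
\begin{equation*}
0 \,=\, F(D^2u,Du,u) \,=\, a_{ij}(x)\partial_{ij} u + b_i(x)\partial_i u + c(x) u
\end{equation*}
with $a_{ij}(x) = \int_0^1 [F_M(tD^2u,tDu,tu)]_{ij}\,dt$ and analogous formulas for $b_i, c$, the coefficients are Lipschitz in $x$ by (F3) and the $C^{3,\alpha}$ regularity of $u$. The classical strong unique continuation theorem for linear elliptic equations in nondivergence form with Lipschitz coefficients then gives $u \equiv 0$ in a full neighborhood of $x_1$, contradicting the maximality of $U$.

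For the main theorem, I would reduce infinite-order vanishing at $x_0$ to vanishing on an open set. By Caffarelli's $C^{1,\alpha}$ estimate, \eqref{vaninford} implies $\norm{u}_{L^\infty(B(x_0,r))}$ decays faster than every power of $r$ as $r \to 0$. The rescaling $v_r(y) := r^{-2}u(x_0 + ry)$ produces a viscosity solution of $\widetilde{F}_r(D^2 v_r, Dv_r, v_r) = 0$, where $\widetilde{F}_r(M,p,z) := F(M, rp, r^2 z)$ lies in the same uniformly elliptic $C^{1,1}$ class uniformly in $r$, and $\norm{v_r}_{L^\infty(B_1)} \to 0$ — precisely the hypothesis of Savin's theorem. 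Applying Savin to $v_r$ and unscaling, then iterating together with the differentiated equation, yields $u \in C^{3,\alpha}$ in a neighborhood of $x_0$, with all derivatives of $u$ of order $\leq 3$ vanishing at $x_0$. Linearizing as above and invoking classical linear unique continuation shows $u \equiv 0$ in a neighborhood of $x_0$, and the open-set step then finishes the proof.

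The main obstacle is the regularity step near $\partial U$. The best \emph{a priori} regularity available for viscosity solutions of \eqref{Feq} is $C^{1,\alpha}$, but linearization and the classical linear unique continuation theorem both require the coefficients to be Lipschitz, essentially demanding $u \in C^{3}$. Carrying out this upgrade on the side of $\partial U$ where $u \not\equiv 0$ — where $u$ is small but nowhere identically zero near the interface — is what demands the nontrivial pairing of the boundary Harnack inequality (to force flatness of $u$ with respect to an affine solution) with Savin's small-perturbation theorem (to convert that flatness into classical $C^{2,\alpha}$ regularity), and it is here that the $C^{1,1}$ hypothesis on $F$, rather than merely $C^1$, is essential.
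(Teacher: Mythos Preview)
Your two-step architecture and the overall toolbox (boundary Harnack, Savin's flat-solution theorem, Schauder bootstrap, linearization, classical linear SUC) match the paper's. The linearization you write for $u$ itself via $a_{ij}(x)=\int_0^1 [F_M(tD^2u,tDu,tu)]_{ij}\,dt$ is a legitimate alternative to the paper's choice of differentiating the equation and applying linear SUC to each $u_e=e\cdot Du$; both yield Lipschitz coefficients once $u\in C^{3,\alpha}$ and $(D^2u,Du,u)$ is near the origin. Your Step~2 is essentially the paper's Step~2 (the paper phrases the $L^\ep\to L^\infty$ passage as the local maximum principle rather than the $C^{1,\alpha}$ estimate, but the content is the same).

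The substantive gap is in how you obtain flatness at the boundary point in Step~1. Applying the boundary Harnack inequality to $u^+$ and $u^-$ does not deliver what Savin's theorem needs. At best you get $|u(x)|\leq C|x-x_1|^{1+\alpha}$ (which you already have from interior $C^{1,\alpha}$, since $u$ and $Du$ vanish on $U$ and hence at $x_1$); under the natural rescaling $v_r(y)=r^{-2}u(x_1+ry)$ this gives $\sup_{B_1}|v_r|\leq Cr^{\alpha-1}\to\infty$, so Savin does not apply. Moreover, $u^+$ and $u^-$ have disjoint supports, so ``controlling their ratio'' has no content here, and in any case you never specify a second function to compare against or a smooth boundary portion on which to run the estimate.

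The paper's key observation is to apply the boundary $C^{1,\alpha}$ estimate (Proposition~\ref{bh}) not to $u$ or $u^\pm$ but to each \emph{directional derivative} $u_e=e\cdot Du$, which satisfies the extremal inequalities $G^-\leq 0\leq G^+$ in the viscosity sense (as a limit of difference quotients) and vanishes on the zero set. To have a smooth boundary for this estimate, one first selects a ball $B$ inside the zero set with $x_0\in\partial B\cap\partial W$ (an interior sphere point), a choice your proposal omits. One then gets $|u_e(x)-k\cdot(x-x_0)|\leq C|x-x_0|^{1+\alpha}$ with $k=0$ forced by the vanishing on $B$, hence $|Du(x)|\leq C|x-x_0|^{1+\alpha}$ and therefore $|u(x)|\leq C|x-x_0|^{2+\alpha}$. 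This exponent exceeds $2$, so the rescaled solution is small and Savin applies. Without this device of passing to $u_e$, Step~1 does not close.
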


To our knowledge, Theorem~\ref{main} is the first unique continuation result for viscosity solutions of general fully nonlinear elliptic equations. Of course, if in addition $F$ is assumed to be concave or convex, then solutions are known \emph{a priori} to be $C^{2,\alpha}$ by the Evans-Krylov theorem. In this case, we may linearize the equation to easily obtain the result, as above.

\medskip

The main idea of our proof of unique continuation is the following. We assume that $\{u=0\}$ has nonempty interior but is not equal to $\Omega$. To obtain a contradiction, it is enough to find one point on the boundary of $\{u=0\}$ around which the solution $u$ is regular enough to linearize the equation in a neighborhood of that point. We prove that this is the case at any point of $\partial \{u=0\}$ satisfying an interior sphere condition. At such a point, we can apply the boundary Harnack inequality to the derivatives of $u$ and then use a result of Savin \cite{S} regarding the regularity of ``flat" solutions of uniformly elliptic equations, obtaining that $u\in C^3$ in a small neighborhood.

\medskip

We remark that Theorem~\ref{main} does not apply to equations of Bellman-Isaacs type, that is, equations of the form
\begin{equation*}
F(D^2u,Du,u) := \inf_{\alpha \in \mathcal I} \sup_{\beta \in \mathcal J} \left( \tr(A_{\alpha\beta} D^2u) + b_{\alpha\beta}\cdot Du + c_{\alpha\beta} u \right) = 0.
\end{equation*}
Such operators are by nature $C^{0,1}$, but not $C^1$. A positively homogeneous operator which is $C^1$ in a neighborhood of the origin is obviously linear, and thus a $C^1$ assumption would be incompatible with nonlinearity. It is not known whether the unique continuation property holds even for the Pucci equations, which are respectively convex and concave, and perhaps the simplest nonlinear equation of Bellman-Isaacs type. 

\medskip

In the next section, we state the preliminary results needed for the proof of Theorem~\ref{main}, which is presented in Section 3.

\section{Preliminaries}

We begin with the statement of a classical strong unique continuation result for the divergence-form linear equation
\begin{equation} \label{lin}
\mathrm{div}(A(x) Du) + b(x) \cdot Du + c(x) u = 0.
\end{equation}
A more general version of the following is proved in H\"ormander \cite{H} (see Theorem 17.2.6) using Carleman estimates. See also Garofalo and Lin \cite{GL}, who obtain the result via monotonicity formulas.

\begin{prop} \label{linearSUC}
Assume that $\Omega$ is a connected domain, the diffusion matrix $A:\Omega \to \Sy$ is uniformly elliptic and Lipschitz, and $b: \Omega \to \R^n$ and $c:\Omega \to \R$ are bounded, measurable functions. Suppose that $u\in W^{1,2}_{\mathrm{loc}}(\Omega)$ is a weak solution of \eqref{lin} in $\Omega$, and that $u$ vanishes of infinite order at some point $x_0 \in \Omega$. Then $u \equiv 0$.
\end{prop}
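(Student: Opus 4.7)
\emph{Strategy.} Following H\"ormander \cite{H}, the plan is to prove a Carleman inequality at $x_0$ and combine it with a connectedness argument. It suffices to establish the \emph{local} statement: there exists $R_0 > 0$, depending only on $n$, $\lambda$, $\Lambda$, $\|A\|_{\mathrm{Lip}}$, $\|b\|_\infty$, and $\|c\|_\infty$, such that if $B(x_0,R)\subset\Omega$ with $R\leq R_0$ and $u$ vanishes of infinite order at $x_0$, then $u \equiv 0$ on $B(x_0,R/2)$. Granting this, the set $N \subset \Omega$ of points at which $u$ vanishes of infinite order is nonempty, open (the local statement produces a ball on which $u\equiv0$, and every point of that ball trivially lies in $N$), and closed in $\Omega$ (if $x_k\to x\in\Omega$ with $x_k\in N$, the balls $B(x_k, R(x_k)/2)$, on which $u\equiv 0$, eventually contain a neighborhood of $x$). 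Connectedness then forces $N=\Omega$, and hence $u\equiv 0$.

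\emph{Reduction and Carleman estimate.} Translating and applying a linear change of coordinates reduces to the case $x_0=0$ and $A(0)=I_n$. Since $A$ is Lipschitz, one rewrites \eqref{lin} in non-divergence form $a_{ij}(x)\partial_{ij}u + \tilde b(x)\cdot Du + c(x)u = 0$ with bounded coefficients and $|a_{ij}(x)-\delta_{ij}|\leq C|x|$. The core step is the Carleman inequality
\begin{equation*}
\tau^3 \int_{B_R} |x|^{-n-2\tau} v^2 \dx + \tau \int_{B_R} |x|^{2-n-2\tau} |Dv|^2 \dx \leq C \int_{B_R} |x|^{4-n-2\tau} \bigl(a_{ij}\partial_{ij}v\bigr)^2 \dx,
\end{equation*}
valid for every $v \in C^\infty_c(B_R\setminus\{0\})$, $R\leq R_0$, and $\tau\geq \tau_0$. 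I would derive it by conjugating through the weight (setting $w = |x|^{-\tau}v$), decomposing $w$ in spherical harmonics on each sphere $\{|x|=r\}$, and tracking the resulting quadratic form in $\tau$ and the angular eigenvalues of $-\Delta_{S^{n-1}}$. The variable-coefficient perturbation $a_{ij}-\delta_{ij} = O(R)$ and the lower-order terms $\tilde b\cdot Du + cu$ are then absorbed into the left-hand side by choosing $R_0$ small and $\tau_0$ large, using the extra factors $|x|^2$ and $|x|^4$ separating the three weights.

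\emph{Application and main obstacle.} To conclude the local statement, apply the estimate to $\chi u$, where $\chi$ is a smooth cutoff equal to $1$ on $\{2r\leq|x|\leq R/2\}$ and supported in $\{r\leq|x|\leq R\}$. Since $Lu=0$ on $B_R$, the right-hand side is supported in the two transition annuli. The inner contribution is bounded by $C r^{-n-2\tau}\bigl(\|u\|_{L^2(B_{2r})}^2 + r^2\|Du\|_{L^2(B_{2r})}^2\bigr)$, which tends to $0$ as $r\to 0$ for each fixed $\tau$: the De Giorgi--Nash--Moser $L^\infty$ bound upgrades the $L^\ep$ infinite-order vanishing of $u$ to $L^2$ vanishing, and Caccioppoli transfers it to $Du$. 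The outer contribution is bounded by $C R^{-n-2\tau}$ for fixed $R$. Sending $r\to 0$ and restricting the left-hand side to $B_\rho$ with $\rho<R/2$, where $|x|^{-n-2\tau}\geq \rho^{-n-2\tau}$, yields $\|u\|_{L^2(B_\rho)}^2 \leq C\tau^{-3}(\rho/R)^{n+2\tau}$, which vanishes as $\tau\to\infty$; hence $u\equiv 0$ on $B_{R/2}$. The main obstacle is the derivation of the Carleman estimate itself with the correct $\tau^3$ coefficient and its stability under Lipschitz perturbation of the principal part; this computation is classical and appears as Theorem~17.2.6 of \cite{H}, so the contribution of the present write-up would be purely organizational.
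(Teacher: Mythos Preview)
The paper does not give its own proof of this proposition; it merely cites H\"ormander \cite[Theorem~17.2.6]{H} for the Carleman-estimate route and Garofalo--Lin \cite{GL} for the alternative via monotonicity formulas. Your sketch is a faithful expansion of the H\"ormander approach the paper invokes, and the outline is sound: the power-weight Carleman inequality, the cutoff argument with inner and outer annuli, the use of the local maximum principle and Caccioppoli to control the inner contribution, and the open--closed connectedness argument are all correctly assembled.

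One point you leave implicit deserves a sentence: rewriting \eqref{lin} in non-divergence form and inserting $\chi u$ into a Carleman estimate whose right-hand side involves $a_{ij}\partial_{ij}v$ requires $u\in W^{2,2}_{\mathrm{loc}}$. This follows from standard $L^2$ elliptic regularity once $A$ is Lipschitz, but you should say so explicitly (and note that the Carleman inequality, stated for $v\in C^\infty_c(B_R\setminus\{0\})$, extends to $W^{2,2}$ functions with compact support in $B_R\setminus\{0\}$ by density, the weights being bounded on such supports). With that addition, your write-up supplies exactly the argument the paper outsources to the literature; the Garofalo--Lin monotonicity approach the paper also cites would avoid the passage to non-divergence form but is otherwise a genuinely different proof.
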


Notice that Proposition~\ref{linearSUC} may be applied to elliptic equations in nondivergence form if the coefficients are Lipschitz and the solution is sufficiently regular, since in that case we can rewrite the equation in divergence form.

\medskip

Our proof of the strong unique continuation property for solutions of \eqref{Feq} relies in a crucial way on the following regularity result of Savin~\cite{S}, which asserts that any sufficiently small viscosity solution of $\eqref{Feq}$ is a classical solution.

\begin{prop}[Savin \cite{S}] \label{flatreg}
Assume that $F$ satisfies (H1), (H2) and (H3). Then there exists a constant $c_1>0$, depending on $F$, such that if $u$ is a viscosity solution of \eqref{Feq} in $B_1$ satisfying $\sup_{B_1} |u| \leq c_1$, then $u \in C^{2,\alpha}(\overline B_{1/2})$.
\end{prop}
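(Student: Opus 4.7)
The approach I would take follows Caffarelli's compactness-iteration framework for $C^{2,\alpha}$ estimates, adapted to the small-solution regime where linearization becomes available. Since $F \in C^{1,1}$ near the origin with $F(0,0,0)=0$, Taylor's theorem gives
\begin{equation*}
F(M,p,z) = L(M,p,z) + R(M,p,z), \qquad |R(M,p,z)| \leq C(|M|^2 + |p|^2 + |z|^2),
\end{equation*}
where $L(M,p,z) := \tr(A_0 M) + b_0 \cdot p + c_0 z$ is a uniformly elliptic constant-coefficient linear operator. Rescaling $v := u/c_1$ shows $v$ is a viscosity solution of $G(D^2v, Dv, v) = 0$ with
\begin{equation*}
G(M,p,z) := c_1^{-1} F(c_1 M, c_1 p, c_1 z) = L(M,p,z) + O(c_1),
\end{equation*}
uniformly on bounded sets. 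Thus as $c_1 \to 0$, $v$ becomes an approximate solution of a linear constant-coefficient equation.

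The first main step is a \emph{one-step improvement of flatness}: for a prescribed $\alpha \in (0,1)$, there exist $\theta \in (0, 1/2)$ and $c_1 > 0$ such that any viscosity solution $u$ of \eqref{Feq} with $\|u\|_{L^\infty(B_1)} \leq c_1$ admits a quadratic polynomial $P$ satisfying
\begin{equation*}
L(D^2P, DP(0), P(0)) = 0, \qquad \|u - P\|_{L^\infty(B_\theta)} \leq \theta^{2+\alpha}\|u\|_{L^\infty(B_1)},
\end{equation*}
with coefficients of $P$ bounded by $C\|u\|_{L^\infty(B_1)}$. The proof is by contradiction and compactness: the interior $C^{1,\alpha}$ estimates of Caffarelli and Trudinger make the family of normalized solutions precompact in $C^{1,\alpha'}(B_{3/4})$, and any subsequential limit solves $L = 0$ in the viscosity sense. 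Such a limit is $C^\infty$, and its second-order Taylor polynomial at $0$ furnishes the required $P$.

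The second main step is iteration. One checks that $\tilde{u}(x) := \theta^{-2-\alpha}(u(\theta x) - P(\theta x))$ is again a viscosity solution of an equation $\tilde{F}(D^2\tilde{u}, D\tilde{u}, \tilde{u}) = 0$ satisfying (F1)--(F3) with the same ellipticity constants and a $C^{1,1}$ neighborhood of $(0,0,0)$ of comparable size, and with $\|\tilde{u}\|_{L^\infty(B_1)} \leq \|u\|_{L^\infty(B_1)}$. Iterating yields a sequence of approximating polynomials $P_k$ at scales $\theta^k$ whose coefficients form a Cauchy sequence, producing a second-order Taylor expansion $|u(x) - P_\infty(x)| \leq C|x|^{2+\alpha}$ at the origin. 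Since the equation is translation invariant in $x$, the same estimate holds at every base point $y \in B_{1/2}$, delivering $u \in C^{2,\alpha}(\overline{B_{1/2}})$ with a uniform bound.

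The principal obstacle is verifying the \emph{stability of the hypotheses under the rescaling}: after subtracting $P$, the recentered operator $\tilde F$ depends on $F$ evaluated near $(D^2P, DP(\theta x), P(\theta x))$, which is a moving point that is no longer exactly $(0,0,0)$. One must show that the $C^{1,1}$ regularity of $F$ near the origin, combined with the smallness of the coefficients of $P$ (guaranteed by taking $c_1$ small enough a priori), ensures that $\tilde{F}$ remains $C^{1,1}$ in a fixed neighborhood of the origin with comparable bounds on $D^2\tilde F$, so that the one-step improvement can be applied anew to $\tilde u$ and the iteration persists indefinitely. Once this stability is established, the summation of polynomial increments and the geometric decay of their errors deliver the claimed $C^{2,\alpha}$ regularity.
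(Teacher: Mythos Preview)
The paper does not give its own proof of this proposition: it is quoted from Savin, with the sole additional remark that Savin's hypothesis $F \in C^2$ can be relaxed to $F \in C^{1,1}$ by a regularization argument, since the constants depend only on $\sup |D^2F|$ near the origin. Your sketch is essentially Savin's method: an improvement-of-flatness lemma proved by compactness (limits solve the linear constant-coefficient equation $L = DF(0,0,0)$, hence are smooth and admit quadratic Taylor approximation), followed by iteration of rescalings. The stability of the $C^{1,1}$ hypothesis under the recentering $\tilde F$, which you correctly flag as the principal technical point, is exactly what must be verified for the iteration to close; your outline is accurate. One minor difference: Savin obtains compactness from a bespoke partial Harnack inequality rather than from the interior $C^{1,\alpha}$ estimates you invoke, but in the uniformly elliptic setting of this paper the $C^{1,\alpha}$ route you take is available and arguably simpler.
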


The estimate in \cite[Theorem 1.3]{S} actually requires that $F\in C^2$ in a neighborhood of the origin, but by inspecting the proof we see that it depends only on $n$, the constants in (F1), and the maximum of $|D^2F|$ near the origin. We therefore obtain the result for $F$ only $C^{1,1}$ near the origin via a standard regularization procedure. 

\medskip

As previously mentioned, the best regularity available for solutions of general uniformly elliptic equation is $C^{1,\alpha}$. A proof of the following interior $C^{1,\alpha}$ regularity assertion in the case $F=F(M)$ can be found in \cite{CC}, see Corollary 5.7. For $F$ with dependence on lower-order terms, we refer for example to Trudinger \cite{T}.

\begin{prop} \label{c1alph}
Assume that $F$ satisfies (F1). Suppose that $u$ is a bounded viscosity solution of \eqref{Feq} in $B_1$. Then $u \in C^{1,\alpha}(\overline B_{1/2})$ for a constant $0 < \alpha <1$ depending only on $n$, $\lambda$, $\Lambda$, $\gamma$, and $\eta$.
\end{prop}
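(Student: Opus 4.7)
I would prove the estimate in two stages: first establish a $C^\alpha$ bound via the Krylov--Safonov Harnack inequality, then bootstrap to $C^{1,\alpha}$ by a compactness/perturbation iteration in which the solution is approximated at geometrically shrinking scales by affine functions.

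\textbf{Step 1: $C^\alpha$ estimate.} Hypothesis (F1) implies that any viscosity solution $u$ of \eqref{Feq} satisfies the two-sided Pucci inequalities
\begin{equation*}
\mathcal{M}^+(D^2 u) + \gamma|Du| + \eta|u| \geq 0 \geq \mathcal{M}^-(D^2 u) - \gamma|Du| - \eta|u|
\end{equation*}
in the viscosity sense, where $\mathcal{M}^\pm$ denote the Pucci extremal operators associated with $\lambda, \Lambda$. The Krylov--Safonov Harnack inequality for this class (see \cite{CC}, Chapter 4) yields $\osc_{B_r} u \leq C r^\alpha \|u\|_{L^\infty(B_1)}$ for some $\alpha \in (0,1)$ depending only on $n, \lambda, \Lambda, \gamma, \eta$.

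\textbf{Step 2: $C^{1,\alpha}$ estimate.} Fix $x_0 \in B_{1/2}$ and assume after translation that $x_0 = 0$ and $u(0) = 0$. I would construct inductively affine functions $\ell_k(x) = a_k + b_k \cdot x$ (with $\ell_0 \equiv 0$), a constant $\theta \in (0, 1/2)$, an exponent $\alpha' \in (0,\alpha)$, and a constant $C > 0$ such that
\begin{equation*}
\sup_{B_{\theta^k}} |u - \ell_k| \leq \theta^{k(1+\alpha')} \quad \text{and} \quad |a_{k+1}-a_k| + \theta^k |b_{k+1}-b_k| \leq C\theta^{k(1+\alpha')},
\end{equation*}
for every $k \geq 0$ (assuming the normalization $\|u\|_{L^\infty(B_1)} \leq 1$). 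Summing, $(a_k, b_k)$ is Cauchy, and this gives a pointwise $C^{1,\alpha'}$ estimate at the origin; varying $x_0 \in B_{1/2}$ and a standard covering argument upgrade this to the full interior bound.

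The inductive step, after rescaling $v(x) := \theta^{-k(1+\alpha')}\bigl(u(\theta^k x) - \ell_k(\theta^k x)\bigr)$, reduces to an affine-approximation lemma: any viscosity solution $v$ of a version of \eqref{Feq} having the same ellipticity constants but with strictly smaller lower-order coefficients $\gamma_k = \theta^k \gamma$, $\eta_k = \theta^{2k}\eta$ and $\|v\|_{L^\infty(B_1)} \leq 1$ admits an affine function $\ell$ with $\sup_{B_\theta} |v - \ell| \leq \theta^{1+\alpha'}$. This is proved by contradiction and compactness: any sequence of alleged counterexamples $v_j$ would, by Step 1 and Arzel\`a--Ascoli together with the stability of viscosity solutions under uniform convergence, subconverge locally uniformly to a viscosity solution $v_\infty$ of the limiting equation $F_0(D^2 v_\infty) = 0$ with $F_0(M) := F(M, 0, 0)$; since such $v_\infty$ are known to be $C^{1,\alpha'}$ in the interior, they admit the required affine approximation, contradicting the defining property of the $v_j$ for large $j$.

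\textbf{Main obstacle.} The crux of the argument is the interior $C^{1,\alpha'}$ regularity for the homogeneous equation $F_0 = F_0(M)$, which is what makes the compactness step effective. There the same affine-iteration is set up, but now the scaling invariance of the equation under $v \mapsto v(r\cdot)/r$ allows a single approximation-by-affine-functions lemma to propagate to every scale without loss. Passing from $F_0$ back to the general operator $F$ is then a routine perturbation, precisely because the rescaling used in Step 2 drives the lower-order coefficients geometrically to zero.
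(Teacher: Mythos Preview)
The paper does not prove this proposition; it merely cites Corollary~5.7 of \cite{CC} for the case $F=F(M)$ and Trudinger~\cite{T} for the extension to lower-order terms. Your outline is essentially the standard argument and matches the spirit of those references: the perturbative Step~2, which reduces to the homogeneous second-order equation $F_0(D^2v)=0$ by driving the lower-order coefficients to zero under rescaling, is exactly how the gradient and zeroth-order dependence are handled.

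One small comment on your final paragraph: in \cite{CC} the base case $F_0(D^2u)=0$ is not proved by an affine-iteration/compactness scheme but rather directly, by observing that the difference quotients $(u(\cdot+he)-u)/h$ lie in the Pucci class $S(\lambda/n,\Lambda)$ and then applying the Krylov--Safonov $C^\alpha$ estimate of your Step~1 to conclude $Du\in C^\alpha$. As you have written it, a compactness proof of the ``single approximation-by-affine-functions lemma'' for $F_0$ would be circular, since under (F1) alone the limiting equation obtained from a blow-up sequence is again $F_0(D^2v_\infty)=0$, and you would need the very $C^{1,\alpha}$ regularity you are trying to establish. The difference-quotient argument is what breaks this circle.
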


A key ingredient in the proof of Theorem~\ref{main} is a H\"older regularity result up to the boundary for solutions of uniformly elliptic equations. Before we state this, let us define the extremal operators
\begin{equation*}
G^\pm (M,p,z): = \mathcal M^\pm (M) \pm \gamma |p| \pm \eta |z|,
\end{equation*}
where the operators $\mathcal M^\pm$ are the usual Pucci operators, defined by
\begin{equation*} 
\mathcal M^+(M) : =  \sup_{\lambda \iden \leq A \leq \Lambda\iden} \tr (AM) \quad \mbox{and} \quad \mathcal M^-(M) : =  \inf_{\lambda \iden \leq A \leq \Lambda\iden} \tr (AM).
\end{equation*}
The condition (F1) can be written equivalently as
\begin{equation} \label{Fsublin}
G^-(M-N,p-q,z-w) \leq F(M,p,z) - F(N,q,w) \leq G^+(M-N,p-q,z-w),
\end{equation}
for all $M,N\in \Sy$, $p,q\in \R^n$, and $z,w\in \R$. We note that the operators $G^\pm$ satisfy the hypotheses (F1), (F2), and (F3). In light of (F2), we see that $G^-(M) \leq F(M) \leq G^+(M)$, and therefore any solution of $F=0$ is both a supersolution of $G^- =0$ and a subsolution of $G^+=0$.

\begin{prop} \label{bh}
Let $\Omega$ be a smooth domain, and $\Gamma$ a relatively open subset of the boundary $\partial \Omega$. Suppose that $u \in C(\Omega\cup \Gamma)$ is a viscosity solution of the differential inequalities
\begin{equation} \label{SLL}
G^-(D^2u,Du,u) \leq 0 \leq G^+(D^2u,Du,u) \quad \mbox{in} \ \Omega,
\end{equation}
and $u = 0$ on $\Gamma$. Then there exists a $C^\alpha$ function $K:\Gamma \to \R^n$ such that at each point $x_0 \in \Gamma$, we have
\begin{equation*}
\left| u(x) - K(x_0) \cdot (x-x_0) \right| \leq C_{x_0} |x-x_0|^{1+\alpha}, \quad x \in \Omega.
\end{equation*}
The constant $\alpha> 0$ depends only on $n$, $\lambda$, $\Lambda$, $\gamma$, and $\eta$, and the constant $C_{x_0}$ depends additionally on $\dist(x_0, \partial \Omega \setminus \Gamma)$. The function $K$ is then the normal derivative to $u$ at the boundary $\Gamma$.
\end{prop}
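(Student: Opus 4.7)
The plan is to reduce to a flat boundary by a local $C^2$ diffeomorphism and then prove the pointwise expansion at each boundary point via a Krylov-type iteration, using the interior Harnack inequality for extremal operators together with a barrier argument to propagate oscillation decay at the boundary. Fix $x_0 \in \Gamma$ and let $r_0 = \frac12 \dist(x_0, \partial\Omega \setminus \Gamma)$. After translating $x_0$ to the origin and using the $C^2$ regularity of $\partial\Omega$, I can choose a smooth diffeomorphism $\Phi: B_{r_0} \to U$ that straightens the boundary so that $\tilde u(y) := u(\Phi(y))$ is defined on $B_{r_0}^+ := B_{r_0} \cap \{y_n > 0\}$, vanishes on $B_{r_0} \cap \{y_n = 0\}$, and satisfies the same family of differential inequalities \eqref{SLL} but with structural constants $\lambda',\Lambda',\gamma',\eta'$ depending on $\|\Phi\|_{C^2}$. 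This reduction is routine because \eqref{SLL} is invariant under $C^2$ changes of variables, at the cost of adjusting the Pucci ellipticity constants.

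Next, I would establish that $\tilde u$ vanishes linearly at the flat boundary: $|\tilde u(y)| \leq C y_n$ on $B_{r_0/2}^+$, where $C$ depends on $\sup_\Omega |u|$, $r_0$, and the structural constants. This follows from a standard barrier argument: the function $\phi(y) = A y_n(1 - B|y|^2)$, for appropriate $A, B$, satisfies $G^+(D^2\phi, D\phi,\phi) < 0$ in a half-ball and dominates $\pm \tilde u$ on the boundary, so the comparison principle gives the bound. With the linear growth in hand, I would then run the standard oscillation-decay iteration: I claim there exist constants $a_k \in \R$ with $|a_{k+1} - a_k| \leq C \rho^{k\alpha}$ and a fixed $\rho \in (0,1)$ such that
\begin{equation*}
\sup_{B_{\rho^k}^+} |\tilde u(y) - a_k y_n| \leq \rho^{k(1+\alpha)}.
\end{equation*}
The inductive step rescales to $w_k(y) = \rho^{-k(1+\alpha)}(\tilde u(\rho^k y) - a_k \rho^k y_n)$, which is a bounded solution of extremal inequalities in $B_1^+$ vanishing on $\{y_n = 0\}$; applying the interior Harnack inequality and the linear-growth bound near the boundary gives a quantitative flatness improvement and produces $a_{k+1}$. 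Summing the telescoping series yields the pointwise expansion $|u(x) - K(x_0)\cdot(x-x_0)| \leq C_{x_0} |x-x_0|^{1+\alpha}$, with $K(x_0)$ normal to $\partial\Omega$ at $x_0$ by construction, proving the first assertion.

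For the $C^\alpha$ regularity of $K$ on $\Gamma$, I would compare expansions at two nearby points $x_0, x_1 \in \Gamma$. Writing the pointwise expansion at both points and evaluating at a point $y$ with $|y - x_0| \approx |y - x_1| \approx d := |x_0 - x_1|$ in an interior cone, the triangle inequality forces
\begin{equation*}
|(K(x_0) - K(x_1)) \cdot (y - x_0)| \leq C d^{1+\alpha},
\end{equation*}
and choosing $y - x_0$ to range over a basis of directions (possible because $K(x_i)$ is constrained to lie along the normal at $x_i$, and the normal direction itself is $C^1$ along $\partial\Omega$) gives $|K(x_0) - K(x_1)| \leq C d^\alpha$, after absorbing into the constants the $C^1$ regularity of the Gauss map on $\partial\Omega$.

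The main obstacle is the inductive step in the oscillation decay: without convexity or concavity on the underlying equation, we cannot use $C^{2,\alpha}$ interior estimates, and we must instead extract the improvement-of-flatness from a compactness-and-Liouville argument at the boundary — if it fails, a sequence of rescalings converges locally uniformly to a global solution in a half-space of extremal inequalities with linear growth and vanishing boundary values, which by classifying such solutions must be linear in $y_n$, contradicting the supposed failure. All remaining steps (the barrier construction, the change of variables, and the comparison yielding $C^\alpha$ dependence of $K$) are standard and depend only on the structural constants in (F1) and the $C^2$ norm of $\partial\Omega$.
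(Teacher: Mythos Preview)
The paper does not give a self-contained proof of this proposition; it records it as a known consequence of the boundary Harnack inequality and Krylov's boundary oscillation estimate, citing Bauman, Krylov, and others for the details in the viscosity setting. Your outline --- flatten the boundary, establish linear growth by a barrier, iterate the oscillation decay of $u/y_n$, and then compare expansions at nearby boundary points to get the $C^\alpha$ dependence of $K$ --- is precisely the classical Krylov route that those references carry out, so your approach and the paper's are the same in substance.

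One point deserves comment. You first say the inductive step comes from the interior Harnack inequality together with the linear-growth bound, and that is the correct mechanism: one applies the weak Harnack inequality to the nonnegative functions $M_k y_n - w_k$ and $w_k - m_k y_n$ (where $M_k,m_k$ bound $w_k/y_n$ above and below), obtaining a quantitative decrease in the oscillation of $w_k/y_n$. No $C^{2,\alpha}$ interior estimates are needed anywhere. But you then label this step the ``main obstacle'' and propose a compactness-and-Liouville substitute. That fallback has a genuine gap: the Liouville statement you invoke --- that a viscosity solution of the extremal inequalities \eqref{SLL} on the half-space, vanishing on the boundary and of at most linear growth, must equal $a y_n$ --- is not a standard result, and proving it is essentially as hard as the boundary $C^{1,\alpha}$ estimate itself (solutions of \eqref{SLL} are only $C^\alpha$ in the interior by Krylov--Safonov, so the usual global-scaling argument does not close). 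Your direct Harnack-based iteration is correct and sufficient; the compactness detour should be dropped.
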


Proposition~\ref{bh} follows from the boundary Harnack inequality, first observed for nondivergence form equations by Bauman \cite{B} and Krylov \cite{K}. It is difficult to find a good reference for the latter result in the generality we require, although we remark that it can be obtained from very straightforward modifications to the proof of \cite[Theorem 1.3]{BCN} in the linear setting, relying on the extremal operators and the natural sublinearity \eqref{Fsublin} in place of linearity. This is explained in some detail in the appendix of the recent preprint~\cite{ASS}.

\section{The strong unique continuation property}

There are two main steps in the proof of Theorem~\ref{main}. We first prove the unique continuation property, that is, we obtain the conclusion of the theorem under the additional assumption that the solution $u$ of \eqref{Feq} vanishes identically in an open subset of $\Omega$. The second step is to reduce the strong unique continuation property to the weaker unique continuation property, which consists of showing that a solution vanishing of infinite order at a point must be identically zero in a neighborhood of that point. Both arguments depend on Propositions~ \ref{linearSUC}, \ref{flatreg}, and \ref{c1alph}, while in the first step we also rely crucially on Proposition~\ref{bh}, and the second step requires the local maximum principle.

\begin{proof}[Proof of Theorem~\ref{main}]

Consider a solution $u \in C^{1,\alpha}_{\mathrm{loc}}(\Omega)$ of \eqref{Feq} in a domain $\Omega$.

\medskip

\noindent\emph{Step 1. Proof of the unique continuation property.}

\medskip

Assume that $u$ vanishes identically on an open subset of $\Omega$, an assumption we remove in Step 2, below. Let $W$ be a connected component of the interior of $\{ x \in \Omega : u(x) = 0\}$, and suppose on the contrary that $W \neq \Omega$. Then we can find a ball $B \subseteq W$ and a point $x_0\in \partial B \cap \partial W$. We derive a contradiction by showing that $u$ vanishes in a neighborhood of the point $x_0$.

Select a unit direction $e\in \R^n$, $|e|=1$, and observe that the function $u_e : = e\cdot Du$ satisfies the differential inequalities \eqref{SLL}, which follows from the fact that $u(x+he) - u(x)$ satisfies \eqref{SLL} for each $h> 0$, as remarked for example in \cite[Proposition 5.5]{CC}. Since $u\in C^{1,\alpha}$ and $u$ vanishes on $B$, it is clear that $u_e$ vanishes on $\overline B$. Applying Proposition~\ref{bh}, we discover that for some $k\in \R^n$,
\begin{equation}
|u_e(x) - k \cdot (x-x_0)| \leq C |x-x_0|^{1+\alpha}, \quad x\in \Omega \setminus B.
\end{equation}
Since $u_e$ vanishes on $B$, we deduce that $k=0$. Indeed, it suffices to consider a slightly smaller ball $\tilde B$, concentric to $B$, at a point $\tilde x_0$ near $x_0$, and argue by continuity. 

We have shown that $|u_e(x)| \leq C|x-x_0|^{1+\alpha}$ in a neighborhood of $x_0$ for every direction $|e|=1$, and so we conclude that
\begin{equation*}
|u(x)| \leq C|x-x_0|^{2+\alpha}, \quad x\in\Omega.
\end{equation*}
By rescaling the solution and applying Proposition~\ref{flatreg} in a small neighborhood of $x_0$, we deduce that $u\in C^{2,\alpha}(\overline B(x_0,r))$ for some small $r > 0$. Clearly $D^2u(x_0) = 0$. Using (F3), we see that by shrinking $r$, if necessary, we may assume that for each point $x\in B(x_0,r)$, the triple $(D^2u(x),Du(x),u(x))$ lies in the neighborhood of $(0,0,0)$ on which $F$ is $C^{1,1}$. It then follows from Schauder estimates that $u\in C^{3,\alpha}(B(x_0,r))$, and we may differentiate \eqref{Feq} to obtain that
\begin{equation} \label{diffFeq}
\frac{\partial F}{M_{ij}} \frac{\partial^2 u_e}{\partial x_i \partial x_j} + \frac{\partial F}{\partial p_i}\frac{\partial u_e}{\partial x_i} + \frac{\partial F}{\partial z}u_e= 0 \quad \mbox{in} \ B(x_0,r).
\end{equation} 
We may also write \eqref{diffFeq} in divergence form as
\begin{equation} \label{df-diffFeq}
\mathrm{div}\!\left( A(x) Du \right) + b(x) \cdot Du + c(x) u = 0\quad \mbox{in} \ B(x_0,r), 
\end{equation}
where the coefficients $A(x) = (a_{ij}(x))$ and $b(x) = (b_1(x),\ldots, b_n(x))$ and $c(x)$ are given by
\begin{equation*}
\left\{ 
\begin{aligned}
& a_{ij}(x) : = \frac{\partial F}{M_{ij}}(D^2u(x),Du(x),u(x)), \\
& b_i(x) : = - \frac{\partial a_{ij}}{\partial x_j}(D^2u(x),Du(x),u(x))(x) + \frac{\partial F}{\partial p_i}(D^2u(x),Du(x),u(x)), \\
& c(x) : = \frac{\partial F}{\partial z}(D^2u(x),Du(x),u(x)).
\end{aligned} \right.
\end{equation*}
It is clear that $A$ is Lipschitz, while $b$ and $c$ are bounded. Therefore we may apply Proposition~\ref{linearSUC} to conclude that $u_e\equiv 0$ in $B(x_0,r)$. Since the latter holds for every $|e|=1$ and $u(0)=0$, we conclude that $u\equiv 0$ in $B(x_0,r)$. The claim is proved, the desired contradiction having been derived.

\medskip

\noindent\emph{Step 2. Proof of the strong continuation property.}

\medskip

To remove the assumption that $u$ vanishes on an open set, we suppose instead that 
$x_0 \in \Omega$ is a point at which $u$ vanishes of infinite order, and proceed to show that $u$ vanishes in a neighborhood of $x_0$.  From \eqref{vaninford} we have $u(x_0) = 0$ and $Du(x_0) = 0$.

The local maximum principle (see \cite[Theorem 4.8]{CC} for $F=F(M)$, or consult \cite{BS} for lower-order terms) and \eqref{vaninford} imply that for all $\beta > 0$,
\begin{equation*}
\sup_{B_r} |u| \leq C \left(\fint_{B_{2r}} |u|^\ep \, dx\right)^{1/\ep} = O\!\left( r^\beta\right).
\end{equation*}
Since the latter holds in particular for some $\beta > 2$, we apply Proposition~\ref{flatreg} to deduce that $u \in C^{2,\alpha}(B(x_0,r))$ for some sufficiently small $r > 0$. Using again that $u$ vanishes of infinite order at $x_0$, we see that $D^2u(x_0) = 0$. We now argue as in Step 1 above to obtain $u\equiv 0$ in $B(x_0,r)$, after possibly shrinking~$r$.
\end{proof}

\subsection*{Acknowledgment}
The first author was partially supported by NSF Grant DMS-1004645, and the second author was partially supported by NSF grant DMS-1001629 and the Sloan Foundation.

\bibliographystyle{plain}
\bibliography{uniqcont}

\begin{thebibliography}{10}

\bibitem{ASS}
S.~N. Armstrong, B.~Sirakov, and C.~K. Smart.
\newblock Singular solutions of fully nonlinear elliptic equations.
\newblock preprint, 2011.

\bibitem{B}
Patricia Bauman.
\newblock Positive solutions of elliptic equations in nondivergence form and
  their adjoints.
\newblock {\em Ark. Mat.}, 22(2):153--173, 1984.

\bibitem{BCN}
H.~Berestycki, L.~A. Caffarelli, and L.~Nirenberg.
\newblock Inequalities for second-order elliptic equations with applications to
  unbounded domains. {I}.
\newblock {\em Duke Math. J.}, 81(2):467--494, 1996.
\newblock A celebration of John F. Nash, Jr.

\bibitem{BS}
J{\'e}r{\^o}me Busca and Boyan Sirakov.
\newblock Harnack type estimates for nonlinear elliptic systems and
  applications.
\newblock {\em Ann. Inst. H. Poincar\'e Anal. Non Lin\'eaire}, 21(5):543--590,
  2004.

\bibitem{CC}
Luis~A. Caffarelli and Xavier Cabr{\'e}.
\newblock {\em Fully nonlinear elliptic equations}, volume~43 of {\em American
  Mathematical Society Colloquium Publications}.
\newblock American Mathematical Society, Providence, RI, 1995.

\bibitem{GL}
Nicola Garofalo and Fang-Hua Lin.
\newblock Unique continuation for elliptic operators: a geometric-variational
  approach.
\newblock {\em Comm. Pure Appl. Math.}, 40(3):347--366, 1987.

\bibitem{H}
Lars H{\"o}rmander.
\newblock {\em The analysis of linear partial differential operators. {III}},
  volume 274 of {\em Grundlehren der Mathematischen Wissenschaften [Fundamental
  Principles of Mathematical Sciences]}.
\newblock Springer-Verlag, Berlin, 1985.
\newblock Pseudodifferential operators.

\bibitem{K}
N.~V. Krylov.
\newblock Boundedly inhomogeneous elliptic and parabolic equations in a domain.
\newblock {\em Izv. Akad. Nauk SSSR Ser. Mat.}, 47(1):75--108, 1983.

\bibitem{S}
Ovidiu Savin.
\newblock Small perturbation solutions for elliptic equations.
\newblock {\em Comm. Partial Differential Equations}, 32(4-6):557--578, 2007.

\bibitem{T}
Neil~S. Trudinger.
\newblock On regularity and existence of viscosity solutions of nonlinear
  second order, elliptic equations.
\newblock In {\em Partial differential equations and the calculus of
  variations, {V}ol.\ {II}}, volume~2 of {\em Progr. Nonlinear Differential
  Equations Appl.}, pages 939--957. Birkh\"auser Boston, Boston, MA, 1989.

\end{thebibliography}

\end{document}